\newtheorem{lem}{Lemma}
\newtheorem{thm}{Theorem}
\newtheorem{dfn}{Definition}
\newtheorem{cor}{Corollary}
\title{Congruence classes of large configurations in vector spaces over finite fields}
\author{Alex McDonald}
\begin{document}

\maketitle

\begin{abstract}
In \cite{GroupActions}, Bennett, Hart, Iosevich, Pakianathan, and Rudnev found an exponent $s<d$ such that any set $E\subset \mathbb{F}_q^d$ with $|E|\gtrsim q^s$ determines $\gtrsim q^{\binom{k+1}{2}}$ congruence classes of $(k+1)$-point configurations for $k\leq d$.  Because congruence classes can be identified with tuples of distances between distinct points when $k\leq d$, and because there are $\binom{k+1}{2}$ such pairs, this means any such $E$ determines a positive proportion of all congruence classes.  In the $k>d$ case, fixing all pairs of distnaces leads to an overdetermined system, so $q^{\binom{k+1}{2}}$ is no longer the correct number of congruence classes.  We determine the correct number, and prove that $|E|\gtrsim q^s$ still determines a positive proportion of all congruence classes, for the same $s$ as in the $k\leq d$ case.
\end{abstract}

\section{Introduction}

The Erdos-Falconer distance problem in $\mathbb{F}_q^d$ asks if any sufficiently large set $E\subset\mathbb{F}_q^d$ must determine all possible distances, or more generally a positive proportion of all distances.  More precisely, given a set $E\subset\mathbb{F}_q^d$, for $d\geq 2$, define the distance set of $E$ by

\[
\Delta(E):=\{(x_1-y_1)^2+\cdots +(x_d-y_d)^2:x,y\in E\}.
\]

Clearly $\Delta(E)\subset\mathbb{F}_q$.  The Erdos-Falconer distance problem asks if there is an $s<d$ such that $|E|\gtrsim q^s$ implies $\Delta(E)=\mathbb{F}_q$, or more generally $\Delta(E)\gtrsim q$. \footnote{Throughout, we use the notation $X\gtrsim Y$ to mean there is a constant $c$ such that $X>cY$.  Likewise, $X\approx Y$ means there are constants $c_1$ and $c_2$ such that $c_1Y\leq X\leq c_2 Y$.}  \\

This problem was studied by Iosevich and Rudnev in \cite{IR}.  In that paper, the authors prove that $\Delta(E)=\mathbb{F}_q$ if $|E|\gtrsim q^{\frac{d+1}{2}}$.  In the continuous context, the Falconer conjecture asserts that any compact set in $\mathbb{R}^d$ of Hausdorff dimension $>\frac{d}{2}$ should have a distance set with positive (one dimensional) Lebesgue measure.  Because of this, it is tempting to make an analogous conjecture in the finite field case, asserting that one should be able to improve the exponent $\frac{d+1}{2}$ obtained in \cite{IR} to $\frac{d}{2}$.  In the case when $d$ is odd, however, this turns out to not be possible; an example constructed in \cite{Sharpness} by Hart, Iosevich, Koh, and Rudnev shows that for any $s<\frac{d+1}{2}$ and any constant $c$, a set $E$ can be constructed with $|E|\geq \frac{c}{2}q^{\frac{d+1}{2}}$ and $|\Delta(E)|<cq$.  Thus, even if we only ask for a positive proportion of distances, the exponent $\frac{d+1}{2}$ cannot be improved in odd dimensions.  The deviation from the continuous case is a consequence of the arithmetic of finite fields; an example is constructed using the existence of non-trivial spheres of radius zero.  \\

In even dimensions, one can still hope to improve the $\frac{d+1}{2}$ exponent.  In \cite{WolffExponent}, Chapman, Erdogan, Hart, Iosevich, and Koh study the $d=2$ case, and prove that if $q\equiv 3 \text{ mod } 4$ one has $|\Delta(E)|\gtrsim q$ whenever $|E|\gtrsim q^{4/3}$.  The same exponent was later obtained by Bennett, Hart, Iosevich, Pakianathan, and Rudnev in \cite{GroupActions} without the assumption that $q\equiv 3 \text{ mod } 4$.  Several interesting variants of the distance problem have also been studied in the finite field context.  In \cite{Vinh2}, Pham, Phuong, Sang, Valculescu, and Vinh consider distances between points and lines, and prove that if $P$ and $L$ are sets of points and lines, respectively, with $|P|\cdot |L|$ sufficiently large, then one obtains a positive proportion of distances.  In \cite{Vinh3}, Lund, Pham, and Vinh study the problem where distance is replaced by a finite field analogue of angle(defined by dividing the dot product by the lengths, in analogue to the usual geometric formula for the sine of the angle between vectors).  The authors obtain a non-trivial exponent which ensures a positive proportion of these angles are obtained.  In this paper, we will consider another variant of this problem, where we replace distances between pairs of points with congruence classes of arbitrary configurations of points.\\

Given $(k+1)$ point configurations $x$ and $y$, i.e. tuples of vectors 

\[
x=(x^1,...,x^{k+1}), y=(y^1,...,y^{k+1})\in(\mathbb{F}_q^d)^{k+1},
\]

say $x$ and $y$ are congruent and write $x\sim y$ if there is a translation $z\in\mathbb{F}_q^d$ and rotation $\theta\in O(\mathbb{F}_q^d)$ such that for each index $i$, we have $y^i=\theta x^i+z$.  Given a set $E\subset \mathbb{F}_q^d$, let $\Delta_k(E)$ be the set of congruence classes of tuples where each entry comes from $E$.  Since two pairs of points are congruent if and only if the distance between the points of each pair is the same, the set $\Delta(E)$ can be identified with $\Delta_1(E)$, and $|\Delta_1(\mathbb{F}_q^d)|=q$.  More generally, for $k\leq d$, two non-degenerate $(k+1)$-point configurations are determined by the $\binom{k+1}{2}$ distances between pairs of distinct points, so $|\Delta_k(\mathbb{F}_q^d)|\approx q^{\binom{k+1}{2}}$.  \\

This viewpoint suggests a generalization of the Erdos-Falconer problem; namely, one can ask how large $E$ must be so that $\Delta_k(E)=\Delta_k(\mathbb{F}_q^d)$, or at least $|\Delta_k(E)|\gtrsim |\Delta_k(\mathbb{F}_q^d)|$.  Hart and Iosevich studied this problem in \cite{HI} and found that if $|E|\gtrsim q^{\frac{kd}{k+1}+\frac{k}{2}}$, then $E$ contains an isometric copy of every non-degenerate $(k+1)$ point configuration, i.e., every such configuration which does not live in an $r$-dimensional affine subspace for any $r<k$.  It is easy to prove that almost all configurations are non-degenerate, i.e. the number of degenerate configurations is $o(|\Delta_k(\mathbb{F}_q^d)|)$.  However, by inspection, we see this exponent is only non-trivial if $d>\binom{k+1}{2}$.  Bennett, Hart, Iosevich, Pakianathan, and Rudnev later found in \cite{GroupActions} that one can recover a positive proportion of all congruence classes, for any $d\geq k\geq 2$, if $|E|\gtrsim q^{d-\frac{d-1}{k+1}}$.  We observe that this exponent is always non-trivial.  In \cite{Vinh1}, Duc Hiep Pham, Thang Pham, and Le Anh Vinh use methods from spectral graph theorm to study the problem when $E=A^d$ for some $A\subset\mathbb{F}^q$, and improve the exponent to $\frac{kd}{k+1-1/d}$ in this case.\\

The $d-\frac{d-1}{k+1}$ exponent in the $k\leq d$ case is proved by identifying congruence classes with distance types; more precisely, one identifies a congruence class of $(k+1)$ point configurations with the $(k+1)\times (k+1)$ matrix with $(i,j)$ entry $\|x^i-x^j\|$.  When $k>d$, this results in an overdetermined system, which means the total number of congruence classes is no longer $q^{\binom{k+1}{2}}$.  In order to extend these results to the $k>d$ case, we need to first determine the size of $|\Delta_k(\mathbb{F}_q^d)|$ for $k>d$.  Note that the space of rotations, which we denote $O(\mathbb{F}_q^d)$, has dimension $\binom{d}{2}$ and the space of translations has dimension $d$, so the space of isometries has dimension $\binom{d}{2}+d=\binom{d+1}{2}$.  So, heuristically, when we take the $d(k+1)$ dimensional space of $(k+1)$-point configurations and quotient by the $\binom{d+1}{2}$ dimensional space of isometries, we expect to get $|\Delta_k(\mathbb{F}_q^d)|\approx q^{d(k+1)-\binom{d+1}{2}}$.  In this paper, we prove that this heuristic is indeed correct, and that the $d-\frac{d-1}{d+1}$ exponent works in the $k\geq d$ case as well.  \\

In the continuous setting, Chatzikonstantinou, Iosevich, Mkrchyan, and Pakianathan \cite{Rigidity} study the problem in the continuous setting for $k\geq d$.  In that paper, the problem of the overdetermined system is resolved using the concept of combinatorial rigidity.  An adequate definition of rigidity in the continuous setting is fairly technical, but the idea is that given $(k+1)$ points, one considers a graph $G$ with those points as vertices, and an edge set $\mathcal{E}$.  They want this graph to be minimal with respect to the property that if $d:\mathcal{E}\to\mathbb{R}_{\geq 0}$ is given, the set of configurations which satisfy $\|x^i-x^j\|=d(i,j)$ for all $(i,j)\in\mathcal{E}$ contains representatives of only finitely many equivalence classes.  In other words, they prove that one can pass to tuples of distances in the $k\geq d$ case by considering not all distances, but rather the distances between a strategically chosen subset of the set of all distinct pairs.  Using this framework, they prove that the set of congruence classes of $(k+1)$ point configurations can be identified with an $m$-dimensional vector space with a natural measure, where

\[
m=d(k+1)-\binom{d+1}{2},
\]

in keeping with the aforementioned heuristic.  The authors then prove that if a compact set $E\subset\mathbb{R}^d$ has Hausdorff dimension $\geq d-\frac{1}{k+1}$ then the set of congruence classes determined by tuples from $E$, viewed as a subset of the aforementioned $m$-space, has positive measure.  \\

Our goal here is to obtain an analogous result in the finite field setting.  Many of the technical issues that arise in the continuous setting are nonexistant here, so the proof is much simpler.  In particular, we work with congruence classes directy rather than explicitly dealing with rigidity or passing to distances, and reduce the problem to counting how many configurations are related by a given rigid motion, then use Fourier analysis to get an adequate bound on our counting function.  Our first step is to prove that our heuristic about the total number of congruence classes is correct; to this end, we have the following theorem.

\begin{thm}
Let $q$ be a power of an odd prime.  For any $k\geq d$, we have $|\Delta_k(\mathbb{F}_q^d)|\approx q^{d(k+1)-\binom{d+1}{2}}$.
\end{thm}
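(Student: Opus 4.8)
The plan is to realize $\Delta_k(\mathbb{F}_q^d)$ as the orbit space of the natural action of the group of rigid motions $G=O(\mathbb{F}_q^d)\ltimes\mathbb{F}_q^d$ on the configuration space $X=(\mathbb{F}_q^d)^{k+1}$, and to count orbits via orbit--stabilizer. Since $\dim O(\mathbb{F}_q^d)=\binom{d}{2}$ one has $|O(\mathbb{F}_q^d)|\approx q^{\binom{d}{2}}$, hence $|G|\approx q^{\binom{d}{2}+d}=q^{\binom{d+1}{2}}$. The heuristic count $q^{d(k+1)-\binom{d+1}{2}}$ is then exactly $|X|/|G|$, and the content of the theorem is that this is the correct order once degenerate configurations are shown to be negligible.

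For the lower bound I would isolate the configurations $x=(x^1,\dots,x^{k+1})$ whose points affinely span $\mathbb{F}_q^d$; this is where the hypothesis $k\geq d$ enters, since such configurations contain $d+1$ affinely independent points. Any rigid motion $v\mapsto\theta v+z$ fixing such a configuration fixes an affine basis, forcing $\theta=I$ and $z=0$, so the stabilizer is trivial and the orbit has full size $|G|\approx q^{\binom{d+1}{2}}$. A short count shows that the non-spanning configurations, those contained in one of the $\approx q^d$ affine hyperplanes, number at most $\approx q^{d+(d-1)(k+1)}=q^{d(k+1)}\cdot q^{d-(k+1)}$, which is $o(q^{d(k+1)})$ precisely because $k\geq d$. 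Hence the spanning configurations number $\approx q^{d(k+1)}$, and dividing by the common orbit size gives $\gtrsim q^{d(k+1)-\binom{d+1}{2}}$ distinct congruence classes.

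For the upper bound I would stratify $X$ by the dimension $r$ of the affine span of a configuration. Counting the choices of an $r$-dimensional affine subspace ($\approx q^{(r+1)(d-r)}$) together with the $\approx q^{r(k+1)}$ configurations spanning it, and dividing by the orbit size, governed by the pointwise stabilizer of the span inside $O(\mathbb{F}_q^d)$ of order $\approx q^{\binom{d-r}{2}}$ when the span is nondegenerate, yields $\approx q^{r(k+1)-\binom{r+1}{2}}$ classes in the stratum of span-dimension $r$. Writing $f(r)=r(k+1)-\binom{r+1}{2}$, one computes $f'(r)=k+\tfrac12-r>0$ for all $r\leq d\leq k$, so $f$ is increasing on $[0,d]$ and the $r=d$ stratum dominates. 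Summing over $r=0,\dots,d$ therefore gives $|\Delta_k(\mathbb{F}_q^d)|\lesssim q^{f(d)}=q^{d(k+1)-\binom{d+1}{2}}$, matching the lower bound.

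I expect the main obstacle to be the orbit-size computation in the degenerate strata. Over a finite field the quadratic form may restrict degenerately to the $r$-dimensional span (when the span meets its own orthogonal complement), in which case the pointwise stabilizer in $O(\mathbb{F}_q^d)$ is larger than $q^{\binom{d-r}{2}}$ and the naive orbit count in that stratum is inflated. I would handle this by further splitting each stratum according to the dimension of the radical of the span and checking that the gain in stabilizer size is offset by the smaller number of configurations whose span has a radical of that dimension, equivalently by an induction on $d$ using the bounded number of isometry types of $r$-dimensional subspaces over $\mathbb{F}_q$, so that every sub-stratum still contributes $\lesssim q^{f(r)}$ and hence stays strictly below the main term coming from the spanning configurations.
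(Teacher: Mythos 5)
Your proposal follows essentially the same route as the paper: realize congruence classes as orbits of the rigid motion group, extract the main term from configurations with full affine span (trivial stabilizer, orbit size $\approx q^{\binom{d+1}{2}}$, and $\approx q^{d(k+1)}$ such configurations precisely because $k\geq d$), and bound the degenerate strata by the span dimension $r$, using that $f(r)=r(k+1)-\binom{r+1}{2}$ increases in $r$ for $r\leq k$. Your lower bound and your arithmetic in the nondegenerate stratum are correct and match the paper's (the paper first passes to pinned configurations $\widetilde{x}$ to quotient out translations, but that difference is cosmetic).

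The one step you leave as a plan is exactly where the paper's real work lies: the strata whose span carries a degenerate restriction of the quadratic form. The paper does not balance ``larger ambient pointwise stabilizer'' against ``fewer such subspaces'' (your framing); instead it passes to the intrinsic isometry group of the span. By Witt's extension theorem, every isometry of a subspace $V\subset\mathbb{F}_q^d$ extends to an isometry of $\mathbb{F}_q^d$, so the classes coming from spans of a fixed isometry type $V$ are counted by $\approx q^{r(k+1)}/|\text{Iso}(V)|$, summed over the boundedly many isometry types $\mathcal{V}_r$ of $r$-dimensional subspaces; one then only needs the lower bound $|\text{Iso}(V)|\gtrsim q^{\binom{r+1}{2}}$, obtained from the radical decomposition $V=\ker g\oplus W$. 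In this formulation degeneracy makes $\text{Iso}(V)$ \emph{larger}, not smaller, so no delicate offsetting is required. If you insist on your formulation, be warned that the ``checking'' is not routine, and the natural first attempt fails: for a span with radical of dimension $s$, the block-diagonal isometries $\text{GL}_s\times\text{Iso}(W)$ contribute only $\approx q^{s^2+\binom{r-s}{2}}$, which can fall below $q^{\binom{r}{2}}$ (take $s=1$, $r=3$); one must also use the unipotent part $\text{Hom}(W,\ker g)$ of the isometry group, or equivalently, in your bookkeeping, an exact count of $r$-dimensional subspaces with radical of dimension $s$. Moreover, both the finiteness of the set of isometry types you invoke and the transitivity statements that make the orbit--stabilizer accounting close up are themselves consequences of Witt's theorem, so any complete version of your argument will end up importing that ingredient, as the paper does.
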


Once we prove this, we proceed to the main theorem:

\begin{thm}
Let $k\geq d\geq 2$, let $q$ be a power of an odd prime, and suppose $E\subset \mathbb{F}_q^d$ satisfies $|E|\gtrsim q^s$, with 

\[
s=d-\frac{d-1}{k+1}.
\]

Then 

\[
|\Delta_k(E)|\gtrsim 2^{-k^2}|\Delta_k(\mathbb{F}_q^d)|.
\]

\end{thm}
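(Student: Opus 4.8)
The plan is to bound $|\Delta_k(E)|$ from below by a second-moment (Cauchy--Schwarz) argument and to reduce everything to estimating a single moment of a rigid-motion counting function. Let $G=O(\mathbb{F}_q^d)\ltimes\mathbb{F}_q^d$ be the group of rigid motions, so that $|G|\approx q^{\binom{d+1}{2}}$, and for $g\in G$ put $\phi(g)=\#\{v\in E:\ gv\in E\}$. Splitting the $|E|^{k+1}$ ordered configurations of $E^{k+1}$ into their congruence classes and applying Cauchy--Schwarz gives
\[
|\Delta_k(E)|\gtrsim\frac{|E|^{2(k+1)}}{\Lambda(E)},\qquad \Lambda(E):=\#\{(x,y)\in E^{k+1}\times E^{k+1}:\ x\sim y\}.
\]
Since $x\sim y$ guarantees at least one $g\in G$ with $gx^i=y^i$ for all $i$, I can dominate $\Lambda(E)\le\sum_{g\in G}\phi(g)^{k+1}$, the weighted count of congruent pairs. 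Because $k\ge d$, a non-degenerate configuration affinely spans $\mathbb{F}_q^d$ and is carried to a congruent copy by a \emph{unique} motion; as non-degenerate configurations account for all but a negligible proportion of classes, it suffices to run Cauchy--Schwarz over these, and for them the bound above is lossless up to the negligible degenerate part.

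Everything then rests on the estimate $\sum_{g\in G}\phi(g)^{k+1}\lesssim|G|\,\overline{\phi}^{\,k+1}$, where $\overline{\phi}=|E|^2/q^d$ is the average of $\phi$ over $G$. This target is exactly right: a direct computation gives $|G|\,\overline{\phi}^{\,k+1}=|E|^{2(k+1)}/q^{\,d(k+1)-\binom{d+1}{2}}$, which by Theorem 1 is comparable to $|E|^{2(k+1)}/|\Delta_k(\mathbb{F}_q^d)|$, so this estimate would immediately yield $|\Delta_k(E)|\gtrsim|\Delta_k(\mathbb{F}_q^d)|$. To prove it I write $\phi(g)=\overline{\phi}+\psi(g)$ and expand by the binomial theorem. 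The constant term sums to the main term $|G|\,\overline{\phi}^{\,k+1}$; the linear term vanishes because $\sum_g\psi(g)=0$; and every higher term is controlled by the quadratic fluctuation $M_2:=\sum_g\psi(g)^2$ together with the trivial pointwise bound $|\psi(g)|\le 2|E|$, which interpolate to $\sum_g|\psi(g)|^j\le(2|E|)^{j-2}M_2$ for $2\le j\le k+1$.

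The one remaining input is a bound on $M_2$. Expanding $\sum_g\phi(g)^2$ as a count of pairs of points of $E$ carried into $E$ by a common motion, the number of motions sending one ordered pair to another is $\approx q^{\binom{d-1}{2}}$ whenever the two pairs have equal (possibly zero) distance, so
\[
\sum_g\phi(g)^2\approx q^{\binom{d-1}{2}}\sum_{t\in\mathbb{F}_q}\nu(t)^2+q^{\binom{d}{2}}|E|^2,
\]
where $\nu(t)=\#\{(u,v)\in E^2:\ \|u-v\|=t\}$ and the last term counts coincident pairs. Subtracting $|G|\,\overline{\phi}^{\,2}$ cancels the main term $q^{\binom{d-1}{2}}|E|^4/q$, leaving $M_2\approx q^{\binom{d-1}{2}}\sum_t(\nu(t)-|E|^2/q)^2+q^{\binom{d}{2}}|E|^2$. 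A Gauss-sum computation on the sphere coefficients, which amounts to an application of Parseval, gives the unconditional bound $\sum_t(\nu(t)-|E|^2/q)^2\le q^d|E|^2$, whence $M_2\lesssim q^{\binom{d}{2}+1}|E|^2$. Comparing $(2|E|)^{k-1}M_2$ with $|G|\,\overline{\phi}^{\,k+1}$, the binding case is the top term $j=k+1$, and the inequality $(2|E|)^{k-1}M_2\lesssim|G|\,\overline{\phi}^{\,k+1}$ reduces precisely to $|E|^{k+1}\gtrsim q^{dk+1}$, i.e. to $|E|\gtrsim q^{s}$ with $s=d-\frac{d-1}{k+1}$. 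The lower-order terms $j<k+1$ obey the same inequality with a geometric gain, and summing their contributions accounts for the explicit factor $2^{-k^2}$, which also absorbs the binomial coefficients and the comparison constant from Theorem 1.

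I expect the main difficulty to lie in the bookkeeping around degenerate and isotropic configurations rather than in any deep harmonic analysis, since only Parseval is needed for the fluctuation. Three points require care. First, the uniform stabilizer count $\approx q^{\binom{d-1}{2}}$ must be checked even for pairs at distance zero: the stabilizer of a nonzero isotropic vector mixes unipotent and orthogonal parts, and one verifies its order is still $\approx q^{\binom{d-1}{2}}$. Second, it is exactly the zero-distance (isotropic-cone) contribution to $\sum_t(\nu(t)-|E|^2/q)^2$ that obstructs improving the crude bound $q^d|E|^2$ to the generic $q^{d-1}|E|^2$; this extra factor of $q$ is what turns the naive threshold $d-\frac{d}{k+1}$ into the stated $d-\frac{d-1}{k+1}$, in keeping with the sharpness phenomena discussed in the introduction. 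Finally, one must confirm that configurations spanning a proper affine subspace---whose stabilizers are larger, of order up to $q^{\binom{d}{2}}$---contribute only a lower-order amount to $\sum_g\phi(g)^{k+1}$, so that restricting the Cauchy--Schwarz estimate to non-degenerate configurations is fully justified.
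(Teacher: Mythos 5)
Your proposal is correct and rests on the same skeleton as the paper---Cauchy--Schwarz over congruence classes, followed by an upper bound on the congruent-pair count $\Lambda(E)$ by the $(k+1)$-st moment $\sum_g\phi(g)^{k+1}$ of the rigid-motion counting function (your $\phi(g)$ is exactly the paper's $\nu_\theta(z)$)---but you execute both key estimates by genuinely different means. For the higher moment, the paper runs an induction on $k$ (its Lemma 5), peeling off one factor of $\nu_\theta(z)$ at a time, which is what produces the $2^{k^2}$; your one-shot binomial expansion around the mean, with the interpolation $\sum_g|\psi(g)|^j\le(2|E|)^{j-2}M_2$, uses exactly the same two inputs (the variance bound and the pointwise bound $\phi\le|E|$) but is cleaner and yields a better constant, of order $4^k$ rather than $2^{k^2}$, which trivially suffices since $4^k\le 2^{k^2}$ for $k\ge 2$. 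For the variance bound $M_2\lesssim q^{\binom{d}{2}+1}|E|^2$ (the paper's Lemma 4b), the paper stays entirely on the Fourier side: it computes $\widehat{\nu_\theta}(m)=q^d\widehat{E}(m)\overline{\widehat{E}(\theta^{-1}m)}$, sums over $\theta$ using the $\approx q^{\binom{d-1}{2}}$ stabilizer count, and closes with Plancherel alone, with no exponential-sum cancellation anywhere. You instead work in physical space, using orbit--stabilizer to reduce $\sum_g\phi(g)^2$ to $q^{\binom{d-1}{2}}\sum_t\nu(t)^2$ plus a coincidence term, and then quote the classical $L^2$ distance estimate $\sum_t(\nu(t)-|E|^2/q)^2\lesssim q^d|E|^2$; that estimate is true, but it requires genuine Gauss-sum cancellation (completing the sum in $t$ and bounding $\bigl|\sum_{x,y}E(x)E(y)\chi(s\|x-y\|)\bigr|\lesssim q^{d/2}|E|$), so calling it ``an application of Parseval'' undersells the input: your route is arithmetically heavier than the paper's, though it buys a more geometric picture of where the threshold comes from. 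Two points of care in your write-up. First, the step where you subtract $|G|\overline{\phi}^{\,2}$ and ``cancel the main term'' cannot be carried out with only $\approx$-level information, since multiplicative constants do not survive subtraction; you need the exact orbit--stabilizer identity $\sum_g\phi(g)^2=\sum_t c_t\,\nu^*(t)^2+|O(\mathbb{F}_q^d)|\,|E|^2$, where $\nu^*$ counts non-coincident pairs and $c_t=|O(\mathbb{F}_q^d)|/|S_t\setminus\{0\}|$, after which the cancellation against $|G|\overline{\phi}^{\,2}$ is exact up to admissible errors. Second, your remarks about restricting to non-degenerate configurations (in the first and last paragraphs) are unnecessary and should be dropped: the inequality $\Lambda(E)\le\sum_g\phi(g)^{k+1}$ is unconditional---overcounting by multiple motions only helps in this direction---and the moment bound is blind to affine span, so degenerate tuples are handled automatically; moreover, actually restricting Cauchy--Schwarz to non-degenerate tuples would create an obligation you never discharge (a lower bound on the number of non-degenerate tuples in $E^{k+1}$), whereas the chain of inequalities you display never needs it.
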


Before we continue, we want to make a couple of remarks about this theorem.  First, observe that by Theorem 1, the right hand side is $2^{-k^2}q^{d(k+1)-\binom{d+1}{2}}$.  We also remark that throughout, when we make statements including notation like $\lesssim$ and $\approx$ which involve implicit constants, we allow these constants to depend on $d$.  In previous work, where $k\leq d$, this means constants are allowed to depend on $k$ as well.  However, we will always make the dependence on $k$ explicit, since clearly issues will arise if the number of points in our configruations becomes too large compared to the size of the space we are working in.  In particular, we see from the statement of Theorem 2 that if $k\gtrsim \sqrt{\log q}$ then we are losing a power of $q$ in our estimate.  Thus, we want to think of both $d$ and $k$ as being much smaller than $q$.  \\

To prove Theorem 1, we first establish that there are $\approx q^{d(k+1)-\binom{d+1}{2}}$ congruence classes of non-degenerate tuples (i.e., tuples with full affine span).  We expect this to be our main term, however it is somewhat technical to estimate the number of congruence classes of degenerate tuples.  This is done in section 2.  Theorem 2 is proved by reducing matters to counting pairs $(u,v)\in E$ such that a given rigid motion maps $u$ to $v$.  In section 3, we prove some computational lemmas about the Fourier transform of that counting function, and in section 4 we give the proof of Theorem 2 using these computations.

\section{Size of $\Delta_k(\mathbb{F}_q^d)$}

The purpose of this section is to prove Theorem 1.  We proceed by considering pinned configurations.

\begin{dfn}
Given a $(k+1)$-point configuration $x=(x^1,\cdots,x^k,x^{k+1})\in(\mathbb{F}_q^d)^{k+1}$, the corresponding pinned configuration is defined to be

\[
\widetilde{x}:=(x^1-x^{k+1},\cdots,x^k-x^{k+1},0)\in(\mathbb{F}_q^d)^{k+1}.
\]
\end{dfn}

\begin{lem}
For any $x,y\in (\mathbb{F}_q^d)^{k+1}$, we have $x\sim y$ if and only if there is a rotation $\theta$ such that $\theta\widetilde{x}=\widetilde{y}$.  In particular, for any $y$,

\[
|\{x:x\sim y\}|=q^d|\{\theta \widetilde{y}:\theta\in O(\mathbb{F}_q^d)\}|.
\]
\end{lem}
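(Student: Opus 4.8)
The plan is to first prove the equivalence by direct computation, and then derive the counting formula by analyzing the fibers of the pinning map $x\mapsto\widetilde{x}$. For the forward direction, I would suppose $x\sim y$, so that $y^i=\theta x^i+z$ for all $i$ for some rotation $\theta\in O(\mathbb{F}_q^d)$ and translation $z$. The key observation is that passing to pinned configurations annihilates the translation: subtracting the $(k+1)$-st coordinate gives $\widetilde{y}^i=y^i-y^{k+1}=\theta(x^i-x^{k+1})=\theta\widetilde{x}^i$ for each $i$, since the $z$ terms cancel (and the final coordinate $0=\theta\cdot 0$ is preserved). Hence $\theta\widetilde{x}=\widetilde{y}$ with the same $\theta$.

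For the reverse direction, I would suppose $\theta\widetilde{x}=\widetilde{y}$ and recover the translation explicitly. Unwinding the definition gives $\theta(x^i-x^{k+1})=y^i-y^{k+1}$ for $i=1,\dots,k$, so setting $z:=y^{k+1}-\theta x^{k+1}$ one checks directly that $y^i=\theta x^i+z$ holds for all $i$, including $i=k+1$ where it is automatic. This exhibits the congruence $x\sim y$, completing the equivalence.

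For the counting formula, I would reformulate $\{x:x\sim y\}$ using the equivalence just established: $x\sim y$ if and only if $\widetilde{x}$ lies in the orbit $\{\theta\widetilde{y}:\theta\in O(\mathbb{F}_q^d)\}$, where I use that $O(\mathbb{F}_q^d)$ is a group so that the orbit described via $\theta^{-1}$ coincides with the one described via $\theta$. Thus $\{x:x\sim y\}$ is exactly the preimage of this orbit under the pinning map. It then remains to count the fibers of that map: given a pinned configuration $w=(w^1,\dots,w^k,0)$, the relations $x^i-x^{k+1}=w^i$ show that $x^{k+1}\in\mathbb{F}_q^d$ is free and determines the remaining coordinates, so every fiber has exactly $q^d$ elements. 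Multiplying the uniform fiber size $q^d$ by the size of the orbit yields the claimed identity.

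I do not expect a serious obstacle here; the only points requiring care are tracking which rotation and translation witness each direction of the equivalence, and verifying that the pinning map has uniform fiber size $q^d$ rather than a configuration-dependent count. The latter is what makes the clean product formula possible and is the step I would state most carefully.
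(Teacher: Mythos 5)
Your proof is correct and matches the paper's argument: both directions of the equivalence are established exactly as in the paper (cancelling $z$ for the forward direction, and taking $z=y^{k+1}-\theta x^{k+1}$ for the converse). The paper states the counting formula without proof as an immediate consequence; your fiber-counting argument (each pinned configuration has exactly $q^d$ preimages, and the orbit under $\theta^{-1}$ equals the orbit under $\theta$) is the natural way to fill in that implicit step, so there is no substantive difference in approach.
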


\begin{proof}
If $x\sim y$, there exist $\theta$ and $z$ such that for each $i$, $y^i=\theta x^i+z$.  Therefore,

\[
\theta(x^i-x^{k+1})=(y^i-z)-(y^{k+1}-z)=y^i-y^{k+1}.
\]

Conversely, if $\theta\widetilde{x}=\widetilde{y}$, then

\[
\theta x^i-\theta x^{k+1}=y^i-y^{k+1},
\]

or

\[
y^i=\theta x^i+(y^{k+1}-\theta x^{k+1}).
\]
\end{proof}

\begin{dfn}
For $1\leq r< d$, let $D_r\subset (\mathbb{F}_q^d)^{k+1}$ be the set of configurations which span an $r$-dimensional affine subspace of $\mathbb{F}_q^d$.  Let 

\[
D=\bigcup_{1\leq r<d}D_r.
\]

\end{dfn}

\begin{thm}
We have $|\Delta_k((\mathbb{F}_q^d)^{k+1}\setminus D)|\approx q^{d(k+1)-\binom{d+1}{2}}$.
\end{thm}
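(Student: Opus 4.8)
The plan is to count non-degenerate configurations by fixing a representative from each congruence class and then multiplying by the size of each class. Since we're excluding degenerate configurations, every $x \notin D$ has full affine span, meaning the $k+1$ points span a $d$-dimensional affine subspace. I would first count the total number of non-degenerate configurations, then use Lemma 2 to determine the orbit size $|\{x : x \sim y\}|$ for each such $y$, and divide to obtain the number of distinct congruence classes.

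\medskip

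First I would establish that $|(\mathbb{F}_q^d)^{k+1} \setminus D| \approx q^{d(k+1)}$. This follows because the degenerate configurations in $D$ form a lower-dimensional variety: choosing a configuration that spans only an $r$-dimensional affine subspace requires selecting that subspace and then placing $k+1$ points within it, giving $O(q^{r(k+1) + (d-r)(r+1)})$ such configurations for each $r < d$, which is dominated by $q^{d(k+1)}$. Thus almost all configurations are non-degenerate, and the main term comes entirely from this full-span case.

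\medskip

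The crux is to show that for non-degenerate $y$, the orbit size is $|\{x : x \sim y\}| \approx q^{\binom{d+1}{2}}$. By Lemma 2, this equals $q^d |\{\theta \widetilde{y} : \theta \in O(\mathbb{F}_q^d)\}|$, so I need the orbit of $\widetilde{y}$ under the rotation group to have size $\approx q^{\binom{d}{2}}$, which is the full dimension of $O(\mathbb{F}_q^d)$. The key point is that when $\widetilde{y}$ spans all of $\mathbb{F}_q^d$ (which holds precisely when $y$ is non-degenerate), the stabilizer of $\widetilde{y}$ in $O(\mathbb{F}_q^d)$ is trivial: any rotation fixing a spanning set of vectors must be the identity. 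Hence by orbit-stabilizer the orbit has size exactly $|O(\mathbb{F}_q^d)| \approx q^{\binom{d}{2}}$. Combining, each congruence class of non-degenerate configurations has size $\approx q^{d + \binom{d}{2}} = q^{\binom{d+1}{2}}$, so dividing the count $q^{d(k+1)}$ of non-degenerate configurations by this orbit size yields $|\Delta_k((\mathbb{F}_q^d)^{k+1} \setminus D)| \approx q^{d(k+1) - \binom{d+1}{2}}$.

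\medskip

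The main obstacle I anticipate is making the estimate $|O(\mathbb{F}_q^d)| \approx q^{\binom{d}{2}}$ precise and confirming the stabilizer is trivial on a spanning configuration. The first requires knowing the order of the orthogonal group over $\mathbb{F}_q$, which depends on the dimension's parity and the discriminant of the form, but in all cases is $\approx q^{\binom{d}{2}}$ up to constants depending only on $d$. The stabilizer triviality is the genuinely essential input: I would argue that if $\widetilde{y}$ contains $d$ linearly independent vectors among $y^1 - y^{k+1}, \ldots, y^k - y^{k+1}$, then any $\theta$ fixing each of these fixes a basis and so $\theta = \mathrm{id}$. A subtlety worth checking is that over $\mathbb{F}_q$ one must ensure the spanning vectors are linearly independent rather than merely affinely spanning, but since non-degeneracy gives full affine span and $0$ is one of the pinned points, the remaining $k \geq d$ pinned vectors do contain a linear basis.
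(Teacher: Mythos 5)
Your proposal is correct and takes essentially the same approach as the paper: the paper likewise observes that for non-degenerate $x$ the pinned vectors contain a basis of $\mathbb{F}_q^d$, so $\theta\mapsto\theta\widetilde{x}$ is injective (your trivial-stabilizer argument), whence each congruence class has size $\approx q^d\cdot|O(\mathbb{F}_q^d)|\approx q^{\binom{d+1}{2}}$, and then divides $|(\mathbb{F}_q^d)^{k+1}\setminus D|\approx q^{d(k+1)}$ by this class size. The only cosmetic differences are that you spell out the count of degenerate configurations explicitly (the paper dismisses this with ``by simple counting'') and that the orbit identity you invoke is the paper's Lemma 1, not Lemma 2.
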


\begin{proof}
Let $x\in (\mathbb{F}_q^d)^{k+1}\setminus D$.  It follows that some $d$ of the first $k$ entries of $\widetilde{x}$ constitute a basis of $
\mathbb{F}_q^d$.  Therefore, the map $\theta\mapsto \theta x$ is injective.  It follows from the previous lemma that the congruence class containing $x$ has size $\approx q^{\binom{d+1}{2}}$.  By simple counting, $|(\mathbb{F}_q^d)^{k+1}\setminus D|\approx q^{d(k+1)}$.  The result follows.
\end{proof}

To summarize, we have decomposed $(\mathbb{F}_q^d)^{k+1}$ into a set $D$ of degenerate tuples (i.e., tuples with affine span of dimension $<d$), and the remaining non-degenerate tuples $(\mathbb{F}_q^d)^{k+1}\setminus D$ (tuples with full affine span).  We have proved that the non-degenerate part (which we expect to be the main term) determines $\approx q^{d(k+1)-\binom{d+1}{2}}$ congruence classes.  It remains to prove that the degenerate part $D$ determines $\lesssim q^{d(k+1)-\binom{d+1}{2}}$ congruence classes.  Because we allow our constants to depend on $d$, it is enough tho prove that $|\Delta_k(D_r)|\lesssim q^{d(k+1)-\binom{d+1}{2}}$ for each fixed $r<d$.  \\

Proving this requires us to consider more general bilinear forms.  Recall that throughout this paper, $q$ is a power of an odd prime, as many of the following results assume we are in a field of characteristic not equal to 2.

\begin{dfn}
Let $g:\mathbb{F}_q^d\times\mathbb{F}_q^d\to\mathbb{F}_q$ be a symmetric bilinear form.  The \textbf{kernel} of $g$, denoted $\ker g$, is the set of $x\in\mathbb{F}_q^d$ such that for all $y\in\mathbb{F}_q^d$, we have $g(x,y)=0$.  We say $g$ is \textbf{non-degenerate} if $\ker g=0$.
\end{dfn}

\begin{lem}[\cite{Witt}, Proposition 4.1]
Let $V$ be a subspace of $\mathbb{F}_q^d$ with symmetric bilinear form $g$.  There exists a subspace $W$ of $V$ such that 

\[
V=\ker g \oplus Q.
\]
\end{lem}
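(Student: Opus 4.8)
The statement is a standard structural fact about symmetric bilinear forms, and the plan is to produce $W$ as an ordinary vector-space complement of $\ker g$ and then verify that the restriction $g|_W$ is non-degenerate, which is the content that makes the decomposition useful. First I would invoke the fact that $\ker g$, being a linear subspace of the finite-dimensional space $V$, admits a complementary subspace $W$ with $V=\ker g\oplus W$ as vector spaces; concretely, extend a basis of $\ker g$ to a basis of $V$ and let $W$ be the span of the vectors added. This immediately supplies the direct-sum decomposition asserted in the lemma.

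Next I would observe that this decomposition is automatically orthogonal with respect to $g$. Since every element of $\ker g$ pairs to zero against every vector of $V$ by definition of the kernel, we have $g(u,w)=0$ for all $u\in\ker g$ and $w\in W$, so no additional work is needed to arrange orthogonality. This is what lets us treat $g$ on $V$ as the ``sum'' of the trivial form on $\ker g$ and the form $g|_W$ on $W$.

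The only genuine step is to check that $g|_W$ is non-degenerate. Suppose $w\in W$ satisfies $g(w,w')=0$ for every $w'\in W$. For an arbitrary $v\in V$, decompose $v=u+w'$ with $u\in\ker g$ and $w'\in W$; then $g(w,v)=g(w,u)+g(w,w')=0$, using $u\in\ker g$ for the first term and the hypothesis on $w$ for the second. Hence $w\in\ker g$, but $w\in W$ and $W\cap\ker g=0$, which forces $w=0$. Therefore $\ker(g|_W)=0$ and $g|_W$ is non-degenerate. I do not expect any real obstacle here: the argument is elementary, and the only point requiring care is keeping the two appeals to the kernel condition distinct. In particular, the diagonalizability of symmetric forms in odd characteristic, while available in this section, is not needed for the present decomposition.
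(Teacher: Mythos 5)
Your proof is correct. Note that the paper does not prove this lemma at all: it is quoted from \cite{Witt}, Proposition 4.1, so there is no internal argument to compare against, and your proposal supplies the standard proof that the paper delegates to that reference. You also read correctly through the typo in the statement (the subspace is introduced as $W$ but appears as $Q$ in the displayed equation) and, more importantly, you identified where the real content lies: as literally written the lemma is trivial, since any linear complement of $\ker g$ gives the direct-sum decomposition; the substance, which is what the paper actually uses later (in the Corollary it writes $V=O\oplus W$ ``where $W$ is non-degenerate and $O$ is the kernel''), is that such a complement automatically carries a non-degenerate restricted form. Your verification of that point --- if $w\in W$ is $g$-orthogonal to all of $W$, then since $\ker g$ is orthogonal to everything, $w$ is orthogonal to all of $V$, hence $w\in\ker g\cap W=0$ --- is exactly right, works in any characteristic, and correctly avoids any unnecessary appeal to diagonalization of symmetric forms.
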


We also have Witt's theorem; see, for example, \cite{Lang} chapter XV, Theorem 10.2:

\begin{thm}[Witt]
Consider $\mathbb{F}_q^d$ equipped with the dot product, and let $V,W$ be subspaces.  Any isometry $V\to W$ can be extended to an isometry $\mathbb{F}_q^d\to\mathbb{F}_q^d$.
\end{thm}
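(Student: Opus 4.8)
The plan is to prove the more general statement by induction on $\dim X$, where $(X,\langle\cdot,\cdot\rangle)$ is an arbitrary non-degenerate quadratic space over $\mathbb{F}_q$; since the ambient dot product on $\mathbb{F}_q^d$ is non-degenerate, this specializes to the theorem. Given subspaces $V,W\subseteq X$ and an isometry $\sigma:V\to W$, the base case $\dim X=1$ is immediate, so I would split the inductive step according to whether $V$ contains an anisotropic vector (one with $\langle v,v\rangle\neq 0$) or is totally isotropic.

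The anisotropic case is the engine of the argument and rests on reflections. For $u$ with $\langle u,u\rangle\neq 0$, write $\tau_u(x)=x-\frac{2\langle x,u\rangle}{\langle u,u\rangle}u$ for the reflection fixing $u^\perp$; this is an isometry of $X$. Suppose $v\in V$ is anisotropic and set $w=\sigma(v)$, so $\langle w,w\rangle=\langle v,v\rangle\neq 0$. A direct computation gives $\langle v,v-w\rangle=\tfrac12\langle v-w,v-w\rangle$, so if $\langle v-w,v-w\rangle\neq 0$ then $\tau_{v-w}(v)=w$; otherwise $\langle v+w,v+w\rangle=4\langle v,v\rangle\neq 0$ and $-\tau_{v+w}(v)=w$. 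Either way I obtain an isometry $\rho$ of $X$ with $\rho(v)=w$, and after replacing $\sigma$ by $\rho^{-1}\sigma$ I may assume $\sigma$ fixes $v$. Because $v$ is anisotropic one has $V=\langle v\rangle\oplus(V\cap v^\perp)$, and $\sigma$ then maps $V\cap v^\perp$ into the non-degenerate space $v^\perp$ of dimension $\dim X-1$. The inductive hypothesis applied inside $v^\perp$ extends $\sigma|_{V\cap v^\perp}$ to an isometry of $v^\perp$, and adjoining the identity on $\langle v\rangle$ produces the desired extension of $\sigma$ to $X$. Note this sub-argument needs only that the domain contains an anisotropic vector, not that it be non-degenerate.

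It remains to treat the case where $V$ is totally isotropic (equivalently $\langle\cdot,\cdot\rangle|_V\equiv 0$, since the characteristic is odd). My plan is to reduce this to the anisotropic case by enlarging $V$. Fix $0\neq u\in V$ and $w=\sigma(u)$; since $X$ is non-degenerate I can complete $u$ to a hyperbolic plane $H=\langle u,u^*\rangle$ with $u^*$ isotropic and $\langle u,u^*\rangle=1$, and similarly complete $w$ to a hyperbolic plane $\langle w,w^*\rangle$. The crux is to choose $u^*$ and $w^*$ so that setting $\bar\sigma(u^*)=w^*$ extends $\sigma$ to an isometry $\bar\sigma$ on $V+\langle u^*\rangle$. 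Here I would use that the pairing $X\to V^{*}$, $y\mapsto\langle y,\cdot\rangle|_V$, is surjective (its kernel is $V^\perp$ and $X$ is non-degenerate), so I can prescribe the functional induced on $V$ by $u^*$ and on $W$ by $w^*$, and then correct each by a vector of the relevant perpendicular space to restore isotropy without disturbing that functional. Once $\bar\sigma$ is constructed, the vector $u+\tfrac12u^*$ satisfies $\langle u+\tfrac12u^*,u+\tfrac12u^*\rangle=1$, so $V+\langle u^*\rangle$ contains an anisotropic vector and falls under the previous case, which extends $\bar\sigma$—and hence $\sigma$—to all of $X$.

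I expect the main obstacle to be precisely this last construction of the complementary isotropic vectors: verifying that $u^*$ and $w^*$ can be chosen simultaneously isotropic, dual to $u$ and $w$, and compatible with $\sigma$ across all of $V$. This is where the non-degeneracy of $X$ does the real work, with the splitting of Lemma~2 guaranteeing that the required isotropic completions and orthogonal corrections exist. The anisotropic case, by contrast, is a clean reflection argument, and the attendant bookkeeping of orthogonal complements is routine.
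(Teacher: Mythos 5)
The paper does not prove this statement at all: Witt's theorem is quoted as a classical result, with the proof deferred to Lang \cite{Lang} (Chapter XV, Theorem 10.2) and to Clark's notes \cite{Witt}. So there is no in-paper argument to compare yours against; what you have written is an outline of the standard textbook proof, and it is correct. Your anisotropic case is the classical reflection argument: since $\langle v-w,v-w\rangle+\langle v+w,v+w\rangle=4\langle v,v\rangle\neq 0$, one of $\tau_{v-w}$ or $-\tau_{v+w}$ carries $v$ to $w$, and the reduction to the non-degenerate hyperplane $v^\perp$ feeds the induction on $\dim X$. The step you flag as the main obstacle---constructing $w^*$ in the totally isotropic case---does go through, and the missing detail is short: by non-degeneracy choose $z\in X$ inducing on $W$ the functional $\sigma(y)\mapsto\langle u^*,y\rangle$; then $\langle z,w\rangle=\langle u^*,u\rangle=1$, and since $W$ is totally isotropic we have $w\in W^\perp$, so $w^*:=z-\tfrac{1}{2}\langle z,z\rangle\,w$ is isotropic (the equation for the correction coefficient is linear because $\langle w,w\rangle=0$) and induces the same functional on $W$. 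Note that it is this correction trick and the surjectivity of $X\to W^*$, not Lemma~2 of the paper, that do the real work there. One structural remark: classical treatments typically adjoin hyperbolic partners for an entire basis of the radical of $V$ before invoking the non-degenerate case, whereas you adjoin a single partner $u^*$ and immediately fall back into your first case via the anisotropic vector $u+\tfrac{1}{2}u^*$; this leaner recursion is sound precisely because, as you observe, the anisotropic case only needs the domain to contain one anisotropic vector rather than to be non-degenerate.
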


\begin{cor}
Let $V$ be a subspace of $\mathbb{F}_q^d$, and let $\text{Iso}(V)$ be the group of isometries of $V$.  Then $|\text{Iso}(V)|\gtrsim q^{\binom{r+1}{2}}$.
\end{cor}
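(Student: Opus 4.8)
The claim should be read with $r=\dim V$ and with $\text{Iso}(V)$ denoting the full group of rigid motions of $V$, i.e. maps $x\mapsto \theta x+z$ where $\theta$ is a linear isometry of $V$ for the restriction $g$ of the dot product to $V$, and $z\in V$ is a translation. Since the translation part is a normal subgroup of order $q^r$ and the quotient is the linear isometry group $O(V,g)$, we have $|\text{Iso}(V)|=q^r\,|O(V,g)|$. It therefore suffices to prove the purely linear estimate $|O(V,g)|\gtrsim q^{\binom{r}{2}}$, after which multiplying by the $q^r$ translations yields $q^{\binom{r}{2}+r}=q^{\binom{r+1}{2}}$.

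The plan is to reduce to the non-degenerate case and then build isometries by hand. First I would apply the decomposition lemma above to write $V=\ker g\oplus Q$ with $Q$ non-degenerate; set $n_0=\dim\ker g$ and $m=\dim Q$, so $r=m+n_0$. If $g$ is non-degenerate (so $n_0=0$) the estimate is immediate from the known order of the finite orthogonal group, $|O(Q)|\approx q^{\binom{m}{2}}$, which holds uniformly across the possible types of the form. To handle the degenerate case I would exhibit an explicit large family of linear isometries: writing a vector of $V$ as $w+\rho$ with $w\in Q$ and $\rho\in\ker g$, and choosing $\phi\in O(Q)$, an arbitrary linear map $\psi\colon Q\to\ker g$, and an arbitrary $A\in\mathrm{GL}(\ker g)$, define $T(w+\rho)=\phi w+\psi w+A\rho$. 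Every summand lying in $\ker g$ is annihilated by $g$ against all of $V$, so the cross terms drop out and $g(Tx,Ty)=g(\phi w_1,\phi w_2)=g(x,y)$, making each $T$ a genuine isometry; the block form $\begin{pmatrix}\phi & 0\\ \psi & A\end{pmatrix}$ shows these maps are invertible and pairwise distinct.

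This produces at least $|O(Q)|\cdot q^{mn_0}\cdot|\mathrm{GL}(\ker g)|\gtrsim q^{\binom{m}{2}+mn_0+n_0^2}$ linear isometries. Using the identity $\binom{r}{2}=\binom{m}{2}+mn_0+\binom{n_0}{2}$ together with $n_0^2\geq\binom{n_0}{2}$, this exponent is at least $\binom{r}{2}$, which gives the required bound on $|O(V,g)|$ and hence on $|\text{Iso}(V)|$.

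The main obstacle is precisely the degenerate case. Over a finite field the restriction of the dot product to a subspace can have a nontrivial radical, so $O(V,g)$ is not an ordinary orthogonal group and is in fact strictly larger than $q^{\binom{r}{2}}$. The point is thus not to compute $|O(V,g)|$ exactly but to check that the extra isometries supported on the radical (the maps $\psi$ and $A$ above) more than compensate, which is exactly what the inequality $\binom{m}{2}+mn_0+n_0^2\geq\binom{r}{2}$ records. I expect no difficulty beyond this bookkeeping once the decomposition $V=\ker g\oplus Q$ is in hand, the only external input being the standard order $|O(Q)|\approx q^{\binom{m}{2}}$ of the orthogonal group of a non-degenerate form.
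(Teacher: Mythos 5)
Your proof is correct, and your reading of the statement (with $r=\dim V$ and $\text{Iso}(V)$ the full group of rigid motions $x\mapsto\theta x+z$ of $V$) is the one forced by how the corollary is used in the proof of Theorem 1, where $q^{r(k+1)}/|\text{Iso}(V)|$ counts congruence classes and congruence allows translations. Your argument has the same skeleton as the paper's---decompose $V=\ker g\oplus Q$ by Lemma 2, let all of $\text{GL}(\ker g)$ act on the radical and an orthogonal group act on $Q$---but you add two ingredients the paper's written proof omits, and they are not optional bookkeeping. The paper exhibits only block-diagonal linear isometries and concludes $|\text{Iso}(V)|\gtrsim|\text{GL}_s(\mathbb{F}_q)|\cdot|O(\mathbb{F}_q^{r-s})|\gtrsim|O(\mathbb{F}_q^r)|\approx q^{\binom{r+1}{2}}$. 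That chain does not hold as written: under the paper's own convention (stated in the introduction) the rotation group has size $\approx q^{\binom{r}{2}}$, not $q^{\binom{r+1}{2}}$---it is rotations together with translations that have dimension $\binom{r+1}{2}$---and the block-diagonal family is genuinely too small, since $s^2+\binom{r-s}{2}<\binom{r}{2}$ whenever $1\leq s<(2r-1)/3$. For instance, $r=3$, $s=1$ yields only $\approx q^{1+1}$ block-diagonal linear isometries, hence $\approx q^5$ rigid motions even after appending the $q^3$ translations, short of $q^{\binom{4}{2}}=q^6$. Your two additions repair exactly this: the explicit factor $q^r$ of translations, and the shear isometries $\psi\colon Q\to\ker g$ filling the off-diagonal block, which raise the linear count to $q^{\binom{m}{2}+mn_0+n_0^2}\geq q^{\binom{r}{2}}$ via the identity $\binom{r}{2}=\binom{m}{2}+mn_0+\binom{n_0}{2}$. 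So your proposal is best viewed not as a different route but as the corrected form of the paper's route. The one remaining difference is cosmetic: where the paper invokes Witt's theorem to identify the isometry group of the non-degenerate part with $O(\mathbb{F}_q^{r-s})$, you simply cite the uniform bound $|O(Q)|\approx q^{\binom{m}{2}}$ for non-degenerate forms, which suffices for the estimate.
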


\begin{proof}
By lemma 2, write $V=O\oplus W$ where $W$ is non-degenerate and $O$ is the kernel, and let $s=\dim O$.  Then we have a family of isometries of $V$ of the form

\[
\begin{pmatrix}
A & 0 \\
0 & B
\end{pmatrix}
\]

where $A$ is an isometry of $O$ (i.e., $A\in\text{GL}_s(\mathbb{F}_q)$) and $B$ is an isometry of $W$.  By Witt's theorem, an isometry of $W$ extends to an isometry of $\mathbb{F}_q^d$, so $B\in O(\mathbb{F}_q^{r-s})$.  Therefore, 

\[
|\text{Iso}(V)|\gtrsim |\text{GL}_s(\mathbb{F}_q)|\cdot |O(\mathbb{F}_q^{r-s})|\gtrsim |O(\mathbb{F}_q^r)|\approx q^{\binom{r+1}{2}}.
\]
\end{proof}

\begin{dfn}
Consider the action of $O(\mathbb{F}_q^d)$ on the set of $r$-dimensional subspaces of $\mathbb{F}_q^d$.  Let $\mathcal{V}_r$ be a complete set of representatives of the equivalence classes of this action.
\end{dfn}

We are now ready to prove Theorem 1.

\begin{proof}
It suffices to prove that $|\Delta_k(D_r)|\lesssim q^{d(k+1)-\binom{d+1}{2}}$ for each $1\leq r<d$.  Fix such an $r$, and fix $x,y\in D_r$.  Then $\widetilde{x}$ and $\widetilde{y}$ span an $r$-dimensional subspaces, so there are isometries $T,S$ and $V, W\in\mathcal{V}_r$ such that $Tx\in V$ and $Sy\in W$.  It follows easily from Witt's theorem, and the definition of $\mathcal{V}_r$, that $x\sim y$ if and only if $T=S$, $V=W$, and there is an isometry of $V$ which maps $Tx$ to $Ty$.  This means that the number of $r$-dimensional congruence classes is

\[
|\Delta_k(D_r)|\approx \sum_{V\in\mathcal{V}_r}\frac{q^{r(k+1)}}{|\text{Iso}(V)|}\lesssim \sum_{V\in\mathcal{V}_r} q^{r(k+1)-\binom{r+1}{2}}\lesssim q^{d(k+1)-\binom{d+1}{2}}|\mathcal{V}_r|.
\]

Note that $|\mathcal{V}_r|$ depends on $d$ but not $k$ or $q$, so $|\mathcal{V}_r|\approx 1$.
\end{proof}

\section{Lemmas}

For $f:\mathbb{F}_q^d\to \mathbb{C}$, the Fourier transform of $f$ is given by

\[
\widehat{f}(m)=\frac{1}{q^d}\sum_{x\in\mathbb{F}_q^d}\chi(-x\cdot m)f(x),
\]

where $\chi$ is an additive character on $\mathbb{F}_q$.  For basic facts about the Fourier transform in this context see, for example, \cite{HI}.  Throughout this section, let $E\subset\mathbb{F}_q^d$ be fixed.  For $z\in\mathbb{F}_q^d$ and a rotation $\theta\in O(\mathbb{F}_q^d)$, let
\[
\nu_\theta(z)=|\{(u,v)\in E\times E:u-\theta v=z\}|.
\]

We first prove some lemmas about this function.  Throughout, we identify the set $E$ with it's characteristic function, and let $\widehat{E}$ be the Fourier transform of this function.  

\begin{lem}
We have
\[
\widehat{\nu_\theta}(m)=q^d\widehat{E}(m)\widehat{E}(\theta^{-1}m),
\]
and in particular
\[
\widehat{\nu_\theta}(0)=\frac{|E|^2}{q^d}.
\]
\end{lem}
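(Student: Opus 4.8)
The plan is to expand $\nu_\theta$ as a correlation of the indicator function of $E$ with an orthogonally transformed copy of itself, and then compute its Fourier transform directly from the definition. First I would rewrite the counting function as
\[
\nu_\theta(z)=\sum_{u,v\in\mathbb{F}_q^d}E(u)E(v)\mathbf{1}[u-\theta v=z],
\]
treating $E$ as its own characteristic function. Substituting this into the definition of $\widehat{\nu_\theta}$ and interchanging the order of summation, the sum over $z$ collapses against the constraint $u-\theta v=z$, leaving
\[
\widehat{\nu_\theta}(m)=\frac{1}{q^d}\sum_{u,v}E(u)E(v)\,\chi\big(-(u-\theta v)\cdot m\big).
\]

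The crucial step is to split the character using its additivity, writing $\chi(-(u-\theta v)\cdot m)=\chi(-u\cdot m)\,\chi(\theta v\cdot m)$, and then to move $\theta$ off of $v$. Here I would use that $\theta\in O(\mathbb{F}_q^d)$ preserves the dot product, so that $\theta^{T}=\theta^{-1}$ and hence $\theta v\cdot m=v\cdot\theta^{-1}m$. This is the only point at which the rotation hypothesis enters, and it is exactly what lets the double sum factor into a product of two independent single sums, one in $u$ and one in $v$. Each factor is then recognized as $q^d\widehat{E}$ evaluated at $m$ and (up to the sign of the argument) at $\theta^{-1}m$ respectively; the prefactors of $q^d$ combine with the $1/q^d$ in front to yield the claimed identity $\widehat{\nu_\theta}(m)=q^d\widehat{E}(m)\widehat{E}(\theta^{-1}m)$.

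For the special value I would simply specialize to $m=0$. Since $\theta^{-1}\cdot 0=0$ and $\widehat{E}(0)=q^{-d}\sum_x E(x)=|E|/q^d$, the identity immediately gives
\[
\widehat{\nu_\theta}(0)=q^d\left(\frac{|E|}{q^d}\right)^2=\frac{|E|^2}{q^d}.
\]

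I do not expect a serious obstacle here: the computation is a routine application of the definition of the Fourier transform together with the orthogonality of $\theta$. The only points requiring genuine care are the bookkeeping of the character arguments under the change of summation variable---in particular tracking the sign in the argument $\theta^{-1}m$, which rests on $\theta$ preserving the bilinear form---and confirming that no stray factor of $q^d$ is gained or lost when the sum over $z$ is eliminated against the constraint.
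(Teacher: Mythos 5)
Your proof is correct and takes essentially the same route as the paper: expand $\nu_\theta$ as a sum over pairs $(u,v)$, compute the Fourier transform directly, and use the orthogonality of $\theta$ (i.e.\ $\theta v\cdot m=v\cdot\theta^{-1}m$) to factor the double sum; the only cosmetic difference is that you collapse the $z$-sum against the constraint immediately, while the paper first expands the indicator as a character sum over an auxiliary variable $w$ and collapses it afterward. As you noted, the second factor is really $q^d\widehat{E}(-\theta^{-1}m)=q^d\overline{\widehat{E}(\theta^{-1}m)}$; this conjugate is also dropped in the paper's statement of the lemma, and is harmless since $E$ is real-valued and only $|\widehat{\nu_\theta}(m)|^2$ is used downstream.
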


\begin{proof}
Using the fact that

\[
\sum_w \chi(x\cdot w)=
\begin{cases}
0, & x\neq 0 \\
q^d, & x=0
\end{cases}
\]

we have

\begin{align*}
\nu_\theta(z)&=\sum_{\substack{u,v \\ u-\theta v=z}}E(u)E(v) \\
&=\sum_{u,v}\frac{1}{q^d}\sum_w \chi(-w\cdot u)\chi(w\cdot \theta v)\chi(w\cdot z)E(u)E(v).
\end{align*}

The Fourier transform is then

\begin{align*}
\widehat{\nu_\theta}(m)=&\frac{1}{q^d}\sum_z \chi(-m\cdot z)\nu_\theta(z) \\
=&\frac{1}{q^{2d}}\sum_{u,v}\sum_w\sum_z \chi((w-m)\cdot z)\chi(-w\cdot u)\chi(w\cdot \theta v)E(u)E(v) \\
=&q^d\left(\frac{1}{q^d}\sum_u \chi(-m\cdot u)E(u)\right)\left(\frac{1}{q^d}\overline{\sum_v \chi(-\theta^{-1}m\cdot v)E(v)}\right) \\
=&q^d\widehat{E}(m)\overline{\widehat{E}(\theta^{-1}m)}.
\end{align*}

Going from line 2 to 3, we run the sum in $z$ first, and the term $\chi((w-m)\cdot z)$ guarantees this will be zero unless $w=m$.  We also use the orthogonality of $\theta$ to get $m\cdot \theta v=\theta^{-1}m\cdot v$.

\end{proof}

\begin{lem}
\[
\tag{a} \sum_{\theta,z}\nu_\theta(z)^2\lesssim \frac{|E|^4}{q^{d-\binom{d}{2}}}+q^{\binom{d}{2}+1}|E|^2
\]
\[
\tag{b} \sum_{\theta,z}\left(\nu_\theta(z)-\frac{|E|^2}{q^d}\right)^2\lesssim q^{\binom{d}{2}+1}|E|^2
\]
\end{lem}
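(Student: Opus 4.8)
The plan is to pass to the Fourier side via Lemma 3 and Plancherel, carry out the $\theta$-sum using the orbit structure of $O(\mathbb{F}_q^d)$ acting on spheres, and then observe that (b) is nothing more than the $m\neq 0$ portion of (a). Recall that Plancherel in this normalization reads $\sum_z |f(z)|^2 = q^d\sum_m |\widehat{f}(m)|^2$, and that $\sum_m|\widehat{E}(m)|^2 = \frac{1}{q^d}\sum_x E(x)^2 = \frac{|E|}{q^d}$ since $E$ is a characteristic function.

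First I would prove (a). Applying Plancherel to $\nu_\theta$ and then Lemma 3 gives
\[
\sum_z \nu_\theta(z)^2 = q^d\sum_m |\widehat{\nu_\theta}(m)|^2 = q^{3d}\sum_m |\widehat{E}(m)|^2\,|\widehat{E}(\theta^{-1}m)|^2,
\]
so that $\sum_{\theta,z}\nu_\theta(z)^2 = q^{3d}\sum_\theta\sum_m |\widehat{E}(m)|^2\,|\widehat{E}(\theta^{-1}m)|^2$. I split the inner sum into $m=0$ and $m\neq 0$. Since $\widehat{E}(0) = |E|/q^d$ and $\theta^{-1}0=0$, the $m=0$ contribution summed over $\theta$ is $q^{3d}|O(\mathbb{F}_q^d)|\,|E|^4/q^{4d} = |O(\mathbb{F}_q^d)|\,|E|^4/q^d$, and using $|O(\mathbb{F}_q^d)|\approx q^{\binom{d}{2}}$ this is $\approx |E|^4/q^{d-\binom{d}{2}}$, the first term on the right-hand side of (a).

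The heart of the argument is the $m\neq0$ term. Writing $f(m)=|\widehat{E}(m)|^2$ and exchanging the order of summation, for each fixed $m\neq 0$ the vector $\theta^{-1}m$ runs, as $\theta$ ranges over $O(\mathbb{F}_q^d)$, over the orbit of $m$, with multiplicity equal to the stabilizer size. By Witt's theorem $O(\mathbb{F}_q^d)$ acts transitively on each nonzero sphere $S_t=\{n:\|n\|=t\}$, including the isotropic cone $t=0$ (since any two nonzero isotropic vectors span isometric $1$-dimensional subspaces), and each such orbit has size $\approx q^{d-1}$; hence the stabilizer of any $m\neq0$ has size $\approx |O(\mathbb{F}_q^d)|/q^{d-1}\approx q^{\binom{d-1}{2}}$. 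Setting $A_t=\sum_{n\neq0,\,\|n\|=t}f(n)$, this yields
\[
\sum_\theta\sum_{m\neq0} f(m)\,f(\theta^{-1}m) \lesssim q^{\binom{d-1}{2}}\sum_t A_t^2.
\]
Since $\sum_t A_t\leq \sum_m f(m)=|E|/q^d$ and each $A_t\leq |E|/q^d$, I bound $\sum_t A_t^2\leq (\max_t A_t)(\sum_t A_t)\leq |E|^2/q^{2d}$. Multiplying through by $q^{3d}$ gives $q^{d+\binom{d-1}{2}}|E|^2 = q^{\binom{d}{2}+1}|E|^2$ (the two exponents agree exactly, as $\binom{d}{2}-\binom{d-1}{2}=d-1$), which is the second term of (a).

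Finally, (b) follows with no new computation. Since $|E|^2/q^d = \widehat{\nu_\theta}(0)$, the function $\nu_\theta - |E|^2/q^d$ has vanishing zeroth Fourier coefficient, so Plancherel gives $\sum_z(\nu_\theta(z)-|E|^2/q^d)^2 = q^d\sum_{m\neq0}|\widehat{\nu_\theta}(m)|^2$, which is precisely the $m\neq0$ sum already estimated, hence $\lesssim q^{\binom{d}{2}+1}|E|^2$. The one step requiring genuine care, and the main obstacle, is the orbit/stabilizer count for $O(\mathbb{F}_q^d)$: one must confirm transitivity on every nonzero sphere, in particular on the isotropic cone, and that the orbit is $\gtrsim q^{d-1}$ whenever it is nonempty, so that the stabilizer bound $\lesssim q^{\binom{d-1}{2}}$ holds uniformly in $m$.
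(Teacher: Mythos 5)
Your proposal is correct and follows essentially the same route as the paper: Plancherel plus Lemma 3, splitting off the $m=0$ term to produce $|E|^4/q^{d-\binom{d}{2}}$, bounding the number of rotations carrying one point of a sphere to another by $\approx q^{\binom{d-1}{2}}$, and then dominating $\sum_t A_t^2$ by $(\max_t A_t)(\sum_t A_t)\leq |E|^2/q^{2d}$ via Plancherel, with (b) obtained by noting the subtraction only deletes the $m=0$ term. The only difference is cosmetic: you phrase the rotation count through orbit--stabilizer and transitivity on spheres (and are in fact somewhat more careful than the paper about the isotropic cone), whereas the paper counts rotations mapping a fixed $m$ to a fixed $l$ directly as a coset of $O(\mathbb{F}_q^{d-1})$.
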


\begin{proof}

We have 

\begin{align*}
\sum_{\theta,z}\nu_\theta(z)^2 &=q^d\sum_{\theta,m}|\widehat{\nu_\theta}(m)|^2 \\
&=q^d\sum_{\theta}|\widehat{\nu_\theta}(0)|^2+q^d\sum_\theta \sum_{m\neq 0}|\widehat{\nu_\theta}(m)|^2 \\
&\approx \frac{|E|^4}{q^{d-\binom{d}{2}}}+q^d\sum_\theta \sum_{m\neq 0}|\widehat{\nu_\theta}(m)|^2.
\end{align*}

Using Lemma 3, the second term is

\begin{align*}
&q^d\sum_\theta\sum_{m\neq 0}|\widehat{\nu_\theta}(m)|^2 \\
=&q^d\sum_\theta\sum_{m\neq 0}|q^d\widehat{E}(m)\overline{\widehat{E}(\theta^{-1}m)}|^2 \\
=&q^{3d}\sum_{m\neq 0}|\widehat{E}(m)|^2\sum_\theta |\widehat{E}(\theta m)|^2.
\end{align*}

We can rewrite this as

\[
q^{3d}\sum_{t\in\mathbb{F}_q}\sum_{\substack{m\neq 0 \\ \|m\|=t}}|\widehat{E}(m)|^2 \sum_{\substack{l\neq 0 \\ \|l\|=t}}\sum_{\substack{\theta \\ \theta m=l}}|\widehat{E}(l)|^2
\]

Since the terms in the last sum do not depend on $\theta$, the last sum simply counts the number of $\theta$ that map a given element of $\mathbb{F}_q^d$ to one of the same length.  For $d>2$ the rotations taking a given $m$ to a given $l$ are unique up to the coset of $O(\mathbb{F}_q^{d-1})$, so there are $\binom{d-1}{2}=\binom{d}{2}-d+1$ such rotations.  Therefore, the above quantity is

\[
q^{2d+\binom{d}{2}+1}\sum_{t\in\mathbb{F}_q}\left(\sum_{\substack{m\neq 0 \\ \|m\|=t}}|\widehat{E}(m)|^2\right)^2.
\]

Dominating the sums

\[
\sum_{t\in\mathbb{F}_q}\sum_{\substack{m\neq 0 \\ \|m\|=t}}|\widehat{E}(m)|^2
\]

and

\[
\sum_{\substack{m \\ \|m\|=t}}|\widehat{E}(m)|^2
\]

by sums over the entire vector space and using Plancherel, the previous quantity is $\lesssim q^{\binom{d}{2}+1}|E|^2$.  This proves part (a).  Part (b) is proved similarly; subtracting the zero Fourier coefficient just means the $m=0$ term is absent.
\end{proof}

\begin{lem}
For all $k\in\mathbb{N}$, we have
\[
\sum_{\theta, z}\nu_\theta(z)^{k+1}\lesssim 2^{k^2}\left(q^{\binom{d}{2}+1}|E|^{k+1}+\frac{|E|^{2(k+1)}}{q^{d(k+1)-\binom{d+1}{2}}}\right).
\]
\end{lem}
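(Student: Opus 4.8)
The plan is to exploit the second-moment estimate Lemma 5(b) by centering $\nu_\theta(z)$ around its mean and expanding the power. Set $\mu = |E|^2/q^d$ and $w_\theta(z) = \nu_\theta(z) - \mu$. First I would record two elementary facts. Since $\sum_z \nu_\theta(z) = |E|^2$ for each fixed $\theta$ and there are $|O(\mathbb{F}_q^d)|\,q^d \approx q^{\binom{d+1}{2}}$ pairs $(\theta,z)$, the number $\mu$ is exactly the average value of $\nu_\theta(z)$, so $\sum_{\theta,z} w_\theta(z) = 0$. Moreover, since $0 \le \nu_\theta(z) \le |E|$ and $0 \le \mu \le |E|$ (using $|E| \le q^d$), we have the pointwise bound $|w_\theta(z)| \le |E|$.

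Next I would expand via the binomial theorem. Because $w^j \le |w|^j$ and $\mu \ge 0$, summing the inequality $\nu_\theta(z)^{k+1} = (\mu + w_\theta(z))^{k+1} \le \sum_{j=0}^{k+1}\binom{k+1}{j}\mu^{k+1-j}|w_\theta(z)|^j$ over all $(\theta,z)$ reduces the problem to estimating $\sum_{\theta,z}|w_\theta(z)|^j$ for each $j$. The term $j=1$ can be kept with its sign and vanishes by the first fact above. The term $j=0$ contributes $\mu^{k+1}\,|\{(\theta,z)\}| \approx \mu^{k+1}q^{\binom{d+1}{2}} = |E|^{2(k+1)}/q^{d(k+1)-\binom{d+1}{2}}$, which is exactly the second term of the claimed bound. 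For each $j \ge 2$, the pointwise bound gives $|w|^j \le |E|^{j-2}w^2$, so Lemma 5(b) yields $\sum_{\theta,z}|w_\theta(z)|^j \lesssim |E|^{j-2}q^{\binom{d}{2}+1}|E|^2 = q^{\binom{d}{2}+1}|E|^j$. Factoring out $q^{\binom{d}{2}+1}|E|^{k+1}$, the $j$-th summand carries a factor $\binom{k+1}{j}(|E|/q^d)^{k+1-j} \le \binom{k+1}{j}$, and summing over $j$ bounds the whole fluctuation contribution by $2^{k+1}q^{\binom{d}{2}+1}|E|^{k+1}$. Since $2^{k+1}\le 2^{k^2}$ for $k \ge 2$ (and $k=1$ is just Lemma 5(a)), this gives the claim.

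The main obstacle is conceptual rather than computational: one must center before expanding, and invoke the variance bound 5(b) rather than the raw second-moment bound 5(a). The naive estimate $\nu_\theta(z)^{k+1} \le |E|^{k-1}\nu_\theta(z)^2$ together with 5(a) would produce a main term of order $|E|^{k+3}/q^{d-\binom{d}{2}}$, which has the wrong power of $|E|$ once $k > 1$. Centering is precisely what extracts the correct main term $\mu^{k+1}q^{\binom{d+1}{2}}$ from the constant term of the expansion while confining every fluctuation term to the error term $q^{\binom{d}{2}+1}|E|^{k+1}$; keeping careful track of the binomial coefficients is what produces the harmless exponential factor in $k$.
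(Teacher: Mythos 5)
Your argument is correct, but it is not the paper's argument. The paper proves this lemma by induction on $k$: it decomposes $\sum_{\theta,z}\nu_\theta(z)^{k+1}$ into $A=\sum_{\theta,z}\bigl(\nu_\theta(z)-|E|^2/q^d\bigr)^k\nu_\theta(z)$ plus an alternating-sign binomial correction $B$ involving the lower moments $\sum_{\theta,z}\nu_\theta(z)^{j+1}$, $j<k$, bounds $A$ by the variance estimate and $B$ by the inductive hypothesis, and the factor $2^{k^2}$ is exactly what is needed to absorb the constants compounding through the induction (via $k2^{k+(k-1)^2}\leq 2^{k^2}$). You instead expand $(\mu+w_\theta(z))^{k+1}$ in one shot around the exact mean $\mu=|E|^2/q^d$, kill the $j=1$ term by the exact cancellation $\sum_{\theta,z}w_\theta(z)=0$ (a fact the paper never uses), read off the main term $\mu^{k+1}\cdot|O(\mathbb{F}_q^d)|q^d\approx |E|^{2(k+1)}/q^{d(k+1)-\binom{d+1}{2}}$ from $j=0$, and absorb every $j\geq 2$ term into the variance bound via $|w|^j\leq|E|^{j-2}w^2$; all steps check out (note only that what you cite as ``Lemma 5(a)/(b)'' is Lemma 4(a)/(b) in the paper's numbering, the present statement being Lemma 5). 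Both proofs rest on the same two pillars --- centering at the mean and the variance estimate --- but your non-inductive route is cleaner and quantitatively stronger: it replaces $2^{k^2}$ by $2^{k+1}$ (up to a constant depending only on $d$), and if propagated through the proof of Theorem 2 it would upgrade the conclusion from $|\Delta_k(E)|\gtrsim 2^{-k^2}|\Delta_k(\mathbb{F}_q^d)|$ to $|\Delta_k(E)|\gtrsim 2^{-(k+1)}|\Delta_k(\mathbb{F}_q^d)|$, so that the estimate degrades only when $k\gtrsim\log q$ rather than $k\gtrsim\sqrt{\log q}$.
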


\begin{proof}
We prove the lemma by induction.  The $k=1$ case is simply Lemma 4a.  Assume the statement holds for $1,...,k-1$.  We have

\begin{align*}
&\sum_{\theta, z}\nu_\theta(z)^{k+1} \\
=& \sum_{\theta, z}\left(\nu_\theta(z)-\frac{|E|^2}{q^d}\right)^{k}\nu_\theta(z)+\sum_{j=0}^{k-1}(-1)^{k-j+1}\binom{k}{j}\frac{|E|^{2(k-j)}}{q^{d(k-j)}}\sum_{z,\theta}\nu_\theta(z)^{j+1} \\
=& A+B.
\end{align*}

To estimate $A$, we use Lemma 4b:

\begin{align*}
A=&\sum_{\theta, z}\left(\nu_\theta(z)-\frac{|E|^2}{q^d}\right)^{k}\nu_\theta(z) \\
\lesssim & 2^{k-1}|E|^{k-1}\sum_{\theta, z}\left(\nu_\theta(z)-\frac{|E|^2}{q^d}\right)^2 \\
\lesssim & 2^{k-1}q^{\binom{d}{2}+1}|E|^{k+1}.
\end{align*}

To estimate $B$, using the inductive hypothesis we have

\begin{align*}
B=& \sum_{j=0}^{k-1}(-1)^{k-j+1}\binom{k}{j}\frac{|E|^{2(k-j)}}{q^{2(k-j)}}\sum_{z,\theta}\nu_\theta(z)^{j+1} \\
\lesssim& \sum_{j=0}^{k-1}(-1)^{k-j+1}\binom{k}{j}\frac{|E|^{2(k-j)}}{q^{2(k-j)}}2^{j^2}\left(q^{\binom{d}{2}+1}|E|^{j+1}+\frac{|E|^{2(j+1)}}{q^{d(j+1)-\binom{d+1}{2}}}\right)
\end{align*}

It is easy to show that if $j<k/2$, then $\binom{k}{j}\leq \binom{k}{j+1}$.  So, the binomial coefficients are maximized when $j$ is $k/2$ for $k$ even, or the floor or ceiling of $k/2$ if $k$ is odd.  Using this and Stirling's formula gives $\binom{k}{j}\lesssim 2^k$.  This gives

\begin{align*}
&\left|\sum_{j=0}^{k-1}(-1)^{k-j+1}\binom{k}{j}\frac{|E|^{2(k-j)}}{q^{d(k-j)}}2^{j^2}\left(q^{\binom{d}{2}+1}|E|^{j+1}+\frac{|E|^{2(j+1)}}{q^{d(j+1)-\binom{d+1}{2}}}\right)\right| \\
\lesssim & 2^{k+(k-1)^2}\sum_{j=0}^{k-1}\frac{|E|^{2(k-j)}}{q^{d(k-j)}}\left(q^{\binom{d}{2}+1}|E|^{j+1}+\frac{|E|^{2(j+1)}}{q^{d(j+1)-\binom{d+1}{2}}}\right) \\
\lesssim &k2^{k+(k-1)^2}\max_{0\leq j<k}\left(\frac{|E|^{2k-j+1}}{q^{d(k-j)-\binom{d}{2}-1}}+\frac{|E|^{2(k+1)}}{q^{d(k+1)-\binom{d+1}{2}}}\right)
\end{align*}

It is easily proved by induction that $\log_2(k)+1\leq k$ for $k\in\mathbb{N}$.  Therefore, $k2^{k+(k-1)^2}\leq 2^{k^2}$.  The second term in the parentheses does not depend on $j$.  For the first term, by direct computation we have

\[
\frac{|E|^{2k-j+1}}{q^{d(k-j)-\binom{d}{2}-1}}\lesssim q^{\binom{d}{2}+1}|E|^{k+1},
\]

so the claim is proved.
\end{proof}

\section{Proof of main theorem}

We are now ready to prove Theorem 2.

\begin{proof}

Our goal is to obtain a lower bound for $|\Delta_k(E)|$ in terms of $|E|,q,k$ which will be $\gtrsim 2^{-k^2}q^{d(k+1)-\binom{d+1}{2}}$ when $|E|>q^s$, with $s=d-\frac{d-1}{k+1}$.  We first apply Cauchy-Schwarz.  This gives

\begin{align*}
|E|^{2(k+1)}=&\left(\sum_{\mathcal{C}\in\Delta_k(E)} |\mathcal{C}|\right)^2 \\
\leq & |\Delta_k(E)|\cdot\sum_{\mathcal{C}\in\Delta_k(E)}|\mathcal{C}|^2 \\
=& |\Delta_k(E)|\cdot\sum_{\mathcal{C}\in \Delta_k(E)}\sum_{x,y\in \mathcal{C}} 1 \\
=& |\Delta_k(E)|\cdot\sum_{x,y\in E^{k+1}}\sum_{\substack{\mathcal{C}\in\Delta_k(E) \\ x,y\in \mathcal{C}}} 1 \\
=& |\Delta_k(E)|\cdot\sum_{\substack{x,y\in E^{k+1} \\ x\sim y}} 1.
\end{align*}

Next, we observe that the relation $x\sim y$ is equivalent to the existance of a rotation $\theta$ and a translation $z\in\mathbb{F}_q^d$ such that $x=\theta y+z$.  Moreover, $x=\theta y+z$ means for each $i$, $x^i=\theta y^i+z^i$.  Therefore,

\begin{align*}
\{(x,y)\in E^{k+1}\times E^{k+1}:x\sim y\}=&\{(x,y)\in E^{k+1}\times E^{k+1}:\exists\ \theta,z\ x-\theta y=z\} \\
=&\bigcup_{\theta, z}\{(x,y)\in E^{k+1}\times E^{k+1}:x-\theta y=z\} \\
=&\bigcup_{\theta, z}\{(u,v)\in E\times E:u-\theta v=z\}^{k+1}
\end{align*}

Putting this together and using Lemma 5, we have

\[
|E|^{2(k+1)}\leq |\Delta_k(E)|\sum_{\theta, z}\nu_\theta(z)^{k+1}\lesssim |\Delta_k(E)|2^{k^2}\left(q^{\binom{d}{2}+1}|E|^{k+1}+\frac{|E|^{2(k+1)}}{q^{d(k+1)-\binom{d+1}{2}}}\right).
\]

If the second term on the right is larger, we get $|\Delta_k(E)|\gtrsim 2^{-k^2}q^{d(k+1)-\binom{d+1}{2}}$ for free.  If the first term is larger, we have

\[
|\Delta_k(E)|\gtrsim 2^{-k^2}\frac{|E|^{k+1}}{q^{\binom{d}{2}+1}}.
\]

If $|E|\gtrsim q^s$, then this gives

\[
|\Delta_k(E)|\gtrsim 2^{-k^2}q^{s(k+1)-\binom{d}{2}-1}.
\]

This will be $\gtrsim 2^{-k^2}q^{d(k+1)-\binom{d+1}{2}}$ if $s=d-\frac{d-1}{k+1}$.
\end{proof}

\end{document}